\theoremstyle{plain}
\newtheorem{theorem}{Theorem}[section]
\newtheorem{proposition}[theorem]{Proposition}
\newtheorem{lemma}[theorem]{Lemma}
\newtheorem{corollary}[theorem]{Corollary}
\theoremstyle{definition}
\newtheorem{remark}[theorem]{Remark}
\newcommand{\N}{\mathbb{N}}
\newcommand{\Z}{\mathbb{Z}}
\newcommand{\Pro}{\mathbb{P}}
\newcommand{\calG}{\mathcal{G}}
\newcommand{\calV}{\mathcal{V}}
\newcommand{\calE}{\mathcal{E}}
\newcommand{\dist}{\textup{dist}}
\newcommand{\pp}{\textbf{\textup{p}}}
\newcommand{\ee}{\textbf{\textup{e}}}
\newcommand{\phph}{\textbf{\textup{q}}}
\author{Hugo Vanneuville\thanks{CNRS and Institut Fourier (Université Grenoble Alpes)}}
\title{Sharpness of Bernoulli percolation via couplings}
\date{}
\begin{document}

\maketitle

\abstract{In this paper, we consider Bernoulli percolation on a locally finite, transitive and infinite graph (e.g.\ the hypercubic lattice $\Z^d$). We prove the following estimate, where $\theta_n(p)$ is the probability that there is a path of $p$-open edges from $0$ to the sphere of radius~$n$:
\[
\forall p\in [0,1],\forall m,n \ge 1, \quad \theta_{2n} (p-2\theta_m(p))\le C\frac{\theta_n(p)}{2^{n/m}}.
\]
This result implies that $\theta_n(p)$ decays exponentially fast in the subcritical phase. It also implies the mean-field lower bound in the supercritical phase. We thus provide a new proof of the sharpness of the phase transition for Bernoulli percolation.

Contrary to the previous proofs of sharpness, we do not rely on any differential formula. The main novelty is a stochastic domination result which is inspired by [Russo, 1982].

We also discuss a consequence of our result for percolation in high dimensions, where it can be seen as a near-critical sharpness estimate.}

\section{Introduction}

\subsection{Sharpness of the phase transition}

In this paper, we consider Bernoulli bond percolation on a locally finite, vertex-transitive and countably infinite graph $\calG=(\calV,\calE)$ (e.g.\ the hypercubic lattice $\Z^d$). This model is defined as follows: each edge $e \in \calE$ is open with some probability $p \in [0,1]$ and closed otherwise, independently of the other edges. We let $\Pro_p$ denote the corresponding product probability measure on $\{0,1\}^{\calE}$, where $1$ means open and $0$ means closed.

\smallskip

We fix a vertex $0 \in \calV$ once and for all and we let $\dist : \calV^2 \rightarrow \N$ denote the graph distance on $\calG$. Also, for every $n \ge 1$, we let $S_n \subset B_n \subset \calV$ denote the sphere and ball of radius $n$ around $0$, i.e.\footnote{Here and in all the paper, $n$ is an integer. $m,i,j,k$ will also always be integers.}
\[
S_n = \{ v \in \calV : \dist(0,v)=n \} \quad \text{and} \quad B_n = \{ v \in \calV : \dist(0,v) \le n \}.
\]
We let $\calE_n$ be the set of edges in $B_n$, i.e.
\[
\calE_n = \{ \{v,w\} \in\calE : v,w \in B_n \}.
\]
If $W \subset \calV$ and $v \in \calV$, we let $\{ v \leftrightarrow W \}$ denote the event that there is a path of open edges from $v$ to some site of $W$ and we let $\{ v \nleftrightarrow W \} = \{ v \leftrightarrow W \}^c$. We also let
\[
\theta_n(p)=\Pro_p[0 \leftrightarrow S_n] \quad \text{and} \quad \theta(p) = \lim_{n \rightarrow +\infty} \theta_n(p).
\]
(And we use the convention that $\theta_n(p)=0$ for every $n \ge 1$ if $p<0$.) The critical point is
\[
p_c = \inf \{ p \in [0,1] : \theta(p)>0\}.
\]

We refer to \cite{Gri99,BR06} for more about percolation. Our main result is the following estimate.

\begin{theorem}\label{thm:quant}
Let $C_0=4\log2$. Then, for every $p \in [0,1]$ and $m,n \ge 1$,
\[
\theta_{2n} (p-2\theta_m(p)) \le C_0\frac{\theta_n(p)}{2^{n/m}}.
\]
\end{theorem}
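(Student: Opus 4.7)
The plan is to prove the bound by combining a multi-scale renormalization at the scale $m$ with a stochastic domination (the promised ``Russo 1982''-inspired coupling) that converts the occurrence of local $m$-cluster events into a one-shot decrease of the percolation parameter.

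First I would set up an exploration-based decomposition of the connection event. On $\{0 \leftrightarrow S_{2n}\}$ at parameter $p$, any open path produces, in a canonical way (via sequential exploration along the path), a sequence of vertices $v_0=0,v_1,\ldots,v_k$ with $\dist(v_i,v_{i+1})\ge m$ and $k\ge n/m$, where the sub-connection from $v_i$ to $v_{i+1}$ uses edges in a region of diameter $\le m$ around $v_i$. Via the $\theta_n(p)$ factor I also absorb the first segment, say from $0$ to some vertex of $S_n$, so that the remaining task is to propagate from $S_n$ to $S_{2n}$ through $n/m$ successive ``$m$-extensions.''

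Next, the heart of the argument is the Russo-type stochastic domination. I would aim to prove a local coupling statement of the following shape: there is a joint coupling of $\Pro_p$ and $\Pro_{p-2\theta_m(p)}$ on $\{0,1\}^{\calE}$ such that, at each of the $k$ attempted $m$-extensions, the probability that the $p$-configuration ``succeeds'' in extending the cluster by distance $m$ is at most $\tfrac12$ times the probability that the same extension succeeds in the $(p-2\theta_m(p))$-configuration. Crucially, this coupling is built once, globally, by splitting a $\Pro_p$-configuration as an independent union $\omega_{p-2\theta_m(p)}\cup \omega'$ with $\omega'$ of edge density $\sim 2\theta_m(p)$, and exploiting the fact that the probability that an $m$-extension in the thinner percolation is ``pivotally improved'' by $\omega'$ is controlled by $\theta_m(p)$. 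Iterating across the $k\ge n/m$ disjoint attempted extensions (which involve disjoint edge sets by the exploration) compounds the factor $\tfrac12$ to $(\tfrac12)^{n/m}=2^{-n/m}$, while the parameter cost remains the single, non-iterated $2\theta_m(p)$ because the extra layer $\omega'$ is reused across all scales.

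Combining the $\theta_n(p)$ factor from the first stage with the $2^{-n/m}$ factor from the $k$ successive extensions, and tracking boundary/discretization losses through the constant $C_0=4\log 2$, would yield the stated inequality. The step I expect to be the main obstacle is the ``one-shot'' nature of the parameter decrease: a naive, per-scale application of a Russo coupling would subtract $2\theta_m(p)$ at each of the $n/m$ extensions, destroying the bound. The key technical work will thus be to design the coupling so that the gain is multiplicative ($2^{-n/m}$) while the cost on $p$ is additive only once — this is exactly where the novelty of ``inspired by Russo 1982'' has to do its job, presumably by relating the chance that a given $m$-extension requires an $\omega'$-edge to the quantity $\theta_m(p)$, rather than to the density of $\omega'$ itself.
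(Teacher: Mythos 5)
You have correctly isolated the two structural ingredients (a multi-scale renormalization at scale $m$, and a Russo-inspired coupling to pay the parameter cost only once) and, crucially, you have correctly located the hard point: the factor $\tfrac12$ per scale must compound to $2^{-n/m}$ while the decrease in $p$ must not compound. However, the sketch leaves exactly that hard point as a wish ("presumably by relating \dots") rather than a lemma, so as written this is not a proof. Moreover, the specific route you gesture at cannot close. Writing $\Pro_p$ as an independent union $\omega_{p'}\cup\omega'$ with $p'=p-2\theta_m(p)$ and $\omega'$ of density $\approx 2\theta_m(p)$, and then arguing that $\omega'$ "improves" a given $m$-extension with probability $\le \tfrac12$, would require a union bound over the edges in a region of diameter $\sim m$; but the expected number of $\omega'$-edges there is of order $\theta_m(p)\,|\calE_m|$, which is far larger than $1$ (on $\Z^d$, $|\calE_m|\asymp m^d$), so no bound of order $\theta_m(p)$ comes out of $\omega'$ alone. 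There is also a sign error: you write that the $p$-extension succeeds with probability at most $\tfrac12$ times the $(p-2\theta_m(p))$-extension probability, which is impossible for an increasing event; and the corrected statement $\theta_m(p-2\theta_m(p))\le\tfrac12\theta_m(p)$ is essentially the $n=m$ case of the theorem itself, so it cannot be assumed.

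The paper's proof avoids both obstacles by comparing measures rather than decomposing the connection event. The coupling result (Proposition~\ref{prop:coupl}) says that the conditional law $\Pro_\pp[\,\cdot\mid 0\nleftrightarrow S_{n+m}]$ stochastically dominates, on $\calE_n$, the inhomogeneous product measure $\Pro_{\phph_n^m(\pp)}$ in which the parameter at each $e\in\calE_n$ is multiplied by $\pi_{n+m}(e,\pp)$. Its proof explores the cluster of $0$ edge by edge and bounds the conditional pivotality probability of each explored edge by $1-\pi_{n+m}(e,\pp)$ via Harris--FKG --- a clean per-edge bound with no union over edges. Applied to $\{0\nleftrightarrow S_n\}$ this gives $\pi_n(\phph_n^m(\pp))\ge\pi_n(\pp)/\pi_{n+m}(\pp)$, which, after the simple remark that $\pi_{n+m}(\phph_n^m(\pp))$ dominates both $\pi_{n+m}(\pp)$ and $\pi_n(\phph_n^m(\pp))$, upgrades to the square-root estimate $\pi_{n+m}(\phph_n^m(\pp))\ge\sqrt{\pi_n(\pp)}$; iterating this is what produces the $2^{-n/m}$ in the exponent. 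Finally, the "one-shot" parameter decrease is not wired into the coupling at all; it comes from the separate telescoping inequality (Lemma~\ref{lem:prod}), $\prod_{i\ge1}\pi_{im}(p\pi_m(p))\ge\pi_m(p)$, which shows that even though the parameter is thinned again at every scale, the total thinning stays $\ge\pi_m(p)^2\ge 1-2\theta_m(p)$. That telescoping lemma, together with the square-root trick, is the mechanism you were hoping your layered coupling would supply; without an explicit substitute for it, the proposal has a genuine gap.
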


\begin{remark}
One can actually replace $p-2\theta_m(p)$ by $p(1-2\theta_m(p))$ (cf.\ the proof at the beginning of Section \ref{sec:exp}). This can be useful when $p_c$ is small. (But we have chosen to minimize the number of parentheses in the statement.)
\end{remark}

\begin{remark}
If $\theta(p_c)=0$ then Theorem \ref{thm:quant} applied to $p=p_c$ gives some ``slightly subcritical'' information. In the case of percolation in high dimensions, Theorem~\ref{thm:quant} applied to $p=p_c$ is sharp in some sense. See Section \ref{sec:cons}, where we discuss this further and give a new proof of a near-critical sharpness bound from the recent papers \cite{CHS21,HMS21}.
\end{remark}

Theorem \ref{thm:quant} implies the following theorem that is often referred to as sharpness of the phase transition (or more precisely subcritical sharpness and the mean-field lower bound). The first item -- i.e.\ subcritical sharpness -- has originally been proven independently in \cite{Men86} and \cite{AB87} and the second item -- i.e.\ the mean-field lower bound -- has originally been proven in \cite{CC87}. We refer to \cite{DT16,DRT19} for more recent proofs.

\begin{theorem}\label{thm:sharpness}
Let $p \in [0,1]$.
\begin{itemize}
\item If $p<p_c$, then there exist $c,C>0$ such that, for every $n \ge 1$, $\theta_n(p)\le Ce^{-cn}$;
\item If $p>p_c$, then $\theta(p) \ge (p-p_c)/2$.
\end{itemize}
\end{theorem}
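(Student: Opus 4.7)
The plan is to derive both items of Theorem~\ref{thm:sharpness} directly from Theorem~\ref{thm:quant} by choosing the free parameter $m$ appropriately and then letting $n\to\infty$. The only additional ingredients needed are the elementary facts that $p\mapsto\theta_n(p)$ is non-decreasing (standard edge coupling) and that $n\mapsto\theta_n(p)$ is non-increasing with limit $\theta(p)$. I treat the two items separately.

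For item~1, I would fix $p<p_c$ and wedge it inside the subcritical interval by choosing any $p_1\in(p,p_c)$. By definition of $p_c$ one has $\theta(p_1)=0$, hence $\theta_m(p_1)\downarrow 0$, and so one may fix an $m=m(p,p_1)$ large enough that $2\theta_m(p_1)\le p_1-p$, i.e.\ $p_1-2\theta_m(p_1)\ge p$. Applying Theorem~\ref{thm:quant} at parameter $p_1$, combined with monotonicity of $\theta_{2n}$ in its argument, then yields
\[
\theta_{2n}(p)\le\theta_{2n}\bigl(p_1-2\theta_m(p_1)\bigr)\le C_0\frac{\theta_n(p_1)}{2^{n/m}}\le\frac{C_0}{2^{n/m}},
\]
which is the desired exponential decay (at rate $(\log 2)/m$).

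For item~2, I would argue by contradiction: assume $p>p_c$ yet $\theta(p)<(p-p_c)/2$, so $q:=p-2\theta(p)>p_c$. Pick $\epsilon>0$ with $q-2\epsilon>p_c$, so that $\theta(q-2\epsilon)>0$. Using $\theta_m(p)\downarrow\theta(p)$, fix $m$ so large that $\theta_m(p)\le\theta(p)+\epsilon$; then $p-2\theta_m(p)\ge q-2\epsilon$, and Theorem~\ref{thm:quant} gives
\[
\theta_{2n}(q-2\epsilon)\le\theta_{2n}\bigl(p-2\theta_m(p)\bigr)\le C_0\frac{\theta_n(p)}{2^{n/m}}.
\]
With $m$ now frozen, the right-hand side tends to $0$ as $n\to\infty$ (since $\theta_n(p)\le 1$), while the left-hand side tends to $\theta(q-2\epsilon)>0$ — a contradiction. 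There is no genuine obstacle beyond Theorem~\ref{thm:quant} itself; the only subtlety worth naming is the order of limits in item~2, where one must freeze $m$ before sending $n\to\infty$, so that the factor $2^{n/m}$ actually diverges.
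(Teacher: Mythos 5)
Your proof is correct and follows essentially the same route as the paper: for item~1 you pick an intermediate $p_1\in(p,p_c)$ and $m$ large so that $p\le p_1-2\theta_m(p_1)$, then apply Theorem~\ref{thm:quant} with the crude bound $\theta_n(p_1)\le 1$ (the paper also remarks that the resulting bound on $\theta_{2n}$ extends to odd indices by monotonicity in $n$, a point you leave implicit). For item~2 the paper argues slightly more directly: Theorem~\ref{thm:quant} with $m$ frozen immediately forces $\theta(p-2\theta_m(p))=0$, hence $p-2\theta_m(p)\le p_c$, and letting $m\to\infty$ gives the bound without the contradiction wrapper or the auxiliary $\epsilon$, but the mathematical content is identical to yours.
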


\begin{proof}
Let us first prove exponential decay in the subcritical phase: Let $p < p_c$. We observe that, for every $p' \in (p,p_c)$, there exists $m \ge 1$ such that $p \le p'-2\theta_m(p')$. We fix such $(p',m)$. Theorem \ref{thm:quant} applied to $p'$ (with the crude bound $\theta_n(p')\le 1$) implies that
\[
\forall n \ge 1, \quad \theta_{2n}(p) \le C_0\exp\Big(-\frac{\log2}{m}n\Big).
\]
This implies the result for $n$ even, so also for $n$ odd since $\theta_n(p)$ is decreasing in $n$.

\medskip

Let us now prove the mean field lower bound: Let $p \in [0,1]$ and $m \ge 1$. Theorem~\ref{thm:quant} implies that $\theta(p-2\theta_m(p))=0$. As a result, $p-2\theta_m(p) \le p_c$. If we let $m$ go to $+\infty$, we obtain that $\theta(p) \ge (p-p_c)/2$.
\end{proof}

\begin{remark}\label{rk:cc}
The following result was proven in \cite{CC87}: $\theta(p) \ge (p-p_c)/p$. We will also obtain this result, see the discussion below Corollary~\ref{cor:homog}. However, we will not obtain the bound $\theta(p) \ge \frac{p-p_c}{p(1-p_c)}$ proven in \cite{DT17}.
\end{remark}

\subsection{A stochastic domination result}

The main novelty of this paper is a stochastic domination result. The quantity that will appear in this result is in some sense more $1-\theta_n$ than $\theta_n$ itself, so we let
\[
\pi_n(p) = \Pro_p [ 0 \nleftrightarrow S_n ] = 1 - \theta_n(p) \quad \text{and} \quad \pi(p) = \lim_{n \rightarrow +\infty} \pi_n(p) = 1-\theta(p).
\]

Before stating our result, let us state a corollary. We let $\preceq$ denote stochastic domination between probability measures (i.e.\ the existence of an increasing coupling). We recall that $\calE_n=\{\{v,w\} \in \calE : v,w \in B_n \}$.

\begin{corollary}\label{cor:homog}
For any $m,n \ge 1$ and $p \in (0,1)$,
\[
\Pro_{p\pi_m(p)} \big[ \omega_{|\calE_n} \in \cdot \big] \preceq \Pro_p \big[ \omega_{|\calE_n} \in \cdot \; \big| \; 0 \nleftrightarrow S_{n+m} \big].
\]
By applying this to the decreasing event $\{0 \nleftrightarrow S_n\}$, we obtain that
\begin{equation}\label{eq:cor}
\pi_n(p\pi_m(p)) \ge \frac{\pi_n(p)}{\pi_{n+m}(p)}.
\end{equation}
\end{corollary}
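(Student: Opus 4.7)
Corollary \ref{cor:homog} contains two claims: the stochastic domination on $\calE_n$, and the inequality \eqref{eq:cor} obtained from it by specialization. I would derive \eqref{eq:cor} from the domination in a couple of lines, while the domination itself should follow as a special case of the general stochastic domination result that is the paper's main technical novelty (to be stated in this subsection).

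For the inequality I would apply the domination to the decreasing event $B = \{0 \nleftrightarrow S_n\}$, which depends only on $\omega|_{\calE_n}$. Since stochastic domination reverses on decreasing events,
\[
\pi_n(p\pi_m(p)) \;=\; \Pro_{p\pi_m(p)}[B] \;\ge\; \Pro_p[B \mid 0 \nleftrightarrow S_{n+m}].
\]
Along any path in $\calG$ the graph distance from $0$ varies by at most one per step, so any open path from $0$ to a vertex at distance $n+m$ must visit some vertex of $S_n$; hence $\{0 \nleftrightarrow S_n\} \subseteq \{0 \nleftrightarrow S_{n+m}\}$, and the conditional probability on the right collapses to $\pi_n(p)/\pi_{n+m}(p)$, yielding \eqref{eq:cor}.

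For the stochastic domination itself, I would identify Corollary \ref{cor:homog} as the specialization $A = \{0 \nleftrightarrow S_{n+m}\}$, $U = \calE_n$, $q = \pi_m(p)$ of a general statement of the form ``$\Pro_{pq}[\omega|_U \in \cdot] \preceq \Pro_p[\omega|_U \in \cdot \mid A]$ whenever $A$ is a decreasing event separated from $U$ by a buffer of depth $m$.'' The factor $\pi_m(p)$ is natural because any extension of a cluster from $\calE_n$ out to $S_{n+m}$ must traverse at least $m$ further edges, and forbidding such a traversal from a given endpoint is precisely a Bernoulli$(\pi_m(p))$ constraint. The main obstacle is the coupling behind this general domination, the paper's novel contribution inspired by Russo's 1982 coupling; I would expect an explicit exploration of the conditioned measure, revealing edges one at a time in $U$ and thinning each revealed open edge by an independent ``local non-connection'' Bernoulli, so that the resulting $\omega' \sim \Pro_{pq}$ satisfies $\omega' \le \omega$ on $U$ while being i.i.d.\ $p\pi_m(p)$.
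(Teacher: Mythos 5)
Your proposal matches the paper's approach: the inequality \eqref{eq:cor} is obtained exactly as you describe (the event $\{0\nleftrightarrow S_n\}$ is decreasing, depends only on $\calE_n$, and contains $\{0\nleftrightarrow S_{n+m}\}$, so the conditional probability collapses to $\pi_n(p)/\pi_{n+m}(p)$), and the domination itself is indeed the specialization of the inhomogeneous Proposition~\ref{prop:coupl} to constant $\pp_e=p$, using precisely the observation that every endpoint of an edge $e\in\calE_n$ is at distance at least $m$ from $S_{n+m}$, so $\pi_{n+m}(e,\pp)\ge\pi_m(p)$ and hence $\phph_n^m(\pp)_e\ge p\pi_m(p)$. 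Your sketch of the coupling behind the general proposition (exploration plus thinning, in the spirit of Russo) is also the route the paper takes, though that belongs to the proof of Proposition~\ref{prop:coupl} rather than to the corollary itself.
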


One can interpret Corollary \ref{cor:homog} as follows in the case $n \gg m \gg 1$: up to a small error (that corresponds to what happens in the thin annulus $B_{n+m} \setminus B_n$), multiplying $p$ by $\pi_m(p)$ (which is very close to $1$ if $p<p_c$) has more effect than conditioning on $\{0 \nleftrightarrow S_{n+m}\}$.

\medskip

Inequality \eqref{eq:cor} implies the inequality $\theta(p) \ge (p-p_c)/p$ from Remark \ref{rk:cc}:

\begin{proof}
Let $p \in (p_c,1)$ and $m \ge 1$. Inequality~\eqref{eq:cor} implies that $\pi(p\pi_m(p))=1$. Therefore, $p\pi_m(p) \le p_c$. If we let $m$ go to $+\infty$, we obtain that $\pi(p)\le p_c/p$, or equivalently that $\theta(p)\ge (p-p_c)/p$.
\end{proof}

When the reader looked at Inequality \eqref{eq:cor}, they may have wanted to make a telescopic product. So let us do this right now.

\begin{lemma}\label{lem:prod}
For any $m \ge 1$ and $p \in (0,1)$,
\[
\prod_{i \ge 1} \pi_{im}(p\pi_m(p)) \ge \pi_m(p).
\]
\end{lemma}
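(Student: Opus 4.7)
The plan is to apply Corollary \ref{cor:homog} along the arithmetic progression $n = im$ for $i=1,2,\ldots$ and then just multiply. Setting $n = im$ in \eqref{eq:cor} gives
\[
\pi_{im}(p\pi_m(p)) \ge \frac{\pi_{im}(p)}{\pi_{(i+1)m}(p)},
\]
and the right-hand side is deliberately set up to telescope. Taking the product over $i = 1, \ldots, N$ collapses to
\[
\prod_{i=1}^{N} \pi_{im}(p\pi_m(p)) \ge \prod_{i=1}^{N} \frac{\pi_{im}(p)}{\pi_{(i+1)m)}(p)} = \frac{\pi_m(p)}{\pi_{(N+1)m}(p)} \ge \pi_m(p),
\]
where the last inequality uses that $\pi_{(N+1)m}(p) \le 1$.

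The only thing left is to justify passing $N \to +\infty$ inside the product. Each factor satisfies $\pi_{im}(p\pi_m(p)) \in [0,1]$, so the partial products are non-increasing in $N$. Since $p \in (0,1)$, we have $\pi_m(p) > 0$ (all edges in $\calE_m$ are closed with strictly positive probability), so the partial products are bounded below by a strictly positive number $\pi_m(p)$. Hence the infinite product converges and its limit still satisfies $\prod_{i \ge 1} \pi_{im}(p\pi_m(p)) \ge \pi_m(p)$, which is exactly the claim.

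There is no real obstacle here: the lemma is essentially just Corollary \ref{cor:homog} rewritten as a telescoping product, and the only mild subtlety is checking that the infinite product is well-defined (which follows from monotonicity of the partial products together with the positive lower bound $\pi_m(p) > 0$).
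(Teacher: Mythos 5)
Your proof is correct and uses the same idea as the paper: apply \eqref{eq:cor} at $n=im$ and telescope. The only cosmetic difference is that you work with finite partial products and pass to the limit, whereas the paper writes the infinite telescoping product directly, identifying the telescoped limit as $\pi_m(p)/\pi(p)$ and then using $\pi(p)\le 1$.
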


\begin{proof}
By \eqref{eq:cor},
\[
\prod_{i \ge 1} \pi_{im}(p\pi_m(p)) \ge \prod_{i \ge 1} \frac{\pi_{im}(p)}{\pi_{(i+1)m}(p)} = \frac{\pi_m(p)}{\pi(p)} \ge \pi_m(p). \qedhere
\]
\end{proof}

\begin{remark}
Even if we will use Lemma \ref{lem:prod} rather than the present remark, let us note that a similar telescopic product already implies an interesting sharpness-type result. More precisely, let us prove in this remark that
\begin{equation}\label{eq:summ}
\forall p<p_c, \quad \sum_{n \ge 1} \theta_n(p)<+\infty.
\end{equation}
\begin{proof}[Proof of \eqref{eq:summ}]
Let $p \in (0,p_c)$. We first note that \eqref{eq:summ} is equivalent to $\prod_{n \ge 1} \pi_n(p)>0$, so let us prove this. Let $p' \in (p,p_c)$ and $m \ge 1$ such that $p \le p'\pi_m(p')$. We have
\[
\prod_{n \ge 1} \pi_n(p) \ge \prod_{n \ge 1} \pi_n(p'\pi_m(p')) \overset{\eqref{eq:cor}}{\ge} \prod_{n \ge 1} \frac{\pi_n(p')}{\pi_{n+m}(p')} = \prod_{n=1}^m \pi_n(p') > 0. \qedhere
\]
\end{proof}
\end{remark}

In order to prove Theorem \ref{thm:quant}, we need a result that is analogous to Corollary~\ref{cor:homog} but that involves inhomogeneous percolation. Let $\pp \in [0,1]^\calE$. In inhomogeneous percolation of parameter $\pp$, each edge $e$ is open with probability $\pp_e$ and closed otherwise, independently of the other edges. We let $\Pro_\pp$ denote the corresponding product probability measure on $\{0,1\}^\calE$ and we let
\[
\pi_n(\pp)=\Pro_\pp[0 \nleftrightarrow S_n].
\]
Moreover, for every $e=\{v,w\} \in \calE$ and $n \ge 1$, we choose to measure the probability that $e$ is not connected to $S_n$ by using the following quantity:
\[
\pi_n(e,\pp) = \min \{ \Pro_\pp [ v \nleftrightarrow S_n ] , \Pro_\pp [ w \nleftrightarrow S_n ] \}.
\]
For every $m,n \ge 1$, we define $\phph_n^m : (0,1)^\calE \rightarrow (0,1)^\calE$ by
\[
\phph_n^m(\pp)_e = 
\begin{cases}
\pp_e \pi_{n+m}(e,\pp) \text{ if $e \in \calE_n$,}\\
\text{$\pp_e$ otherwise}.
\end{cases}
\]
(The values of $\phph_n^m$ outside of $\calE_n$ have no importance in the present section, but this choice will be useful later.)

\medskip

Our more precise stochastic domination result is the following proposition.

\begin{proposition}\label{prop:coupl}
For any $m,n \ge 1$ and $\pp \in (0,1)^\calE$,
\[
\Pro_{\phph_n^m(\pp)} \big[ \omega_{|\calE_n} \in \cdot \big] \preceq \Pro_{\pp} \big[ \omega_{|\calE_n} \in \cdot \; \big| \; 0 \nleftrightarrow S_{n+m} \big].
\]
By applying this to the decreasing event $\{0 \nleftrightarrow S_n\}$, we obtain that
\begin{equation}\label{eq:prop}
\pi_n(\phph_n^m(\pp)) \ge \frac{\pi_n(\pp)}{\pi_{n+m}(\pp)}.
\end{equation}
\end{proposition}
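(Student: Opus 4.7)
The plan is to prove Proposition \ref{prop:coupl} by constructing an explicit monotone coupling between $\omega \sim \Pro_{\pp}[\,\cdot\mid A]$, with $A := \{0 \nleftrightarrow S_{n+m}\}$, and $\eta \sim \Pro_{\phph_n^m(\pp)}$, such that $\eta_e \le \omega_e$ for every $e \in \calE_n$. Such a coupling yields the stochastic-domination statement on $\calE_n$, and specialising to the decreasing event $\{0 \nleftrightarrow S_n\}$ gives \eqref{eq:prop}.

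The construction is a sequential revelation of the edges of $\calE_n$. Fix an enumeration $e_1,\dots,e_N$ of $\calE_n$ -- a cluster exploration from $0$ outward being the natural candidate, in the spirit of [Russo, 1982] -- and take i.i.d.\ uniforms $U_1,\dots,U_N$ on $[0,1]$. Define sequentially
\[
\omega_{e_i}=1 \iff U_i\le p^\omega_i, \qquad p^\omega_i := \Pro_\pp\big[\omega_{e_i}=1\,\big|\,\omega_{e_1},\dots,\omega_{e_{i-1}},A\big],
\]
and in parallel $\eta_{e_i}=1 \iff U_i \le \pp_{e_i}\,\pi_{n+m}(e_i,\pp)$. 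Then $\omega|_{\calE_n}$ has the required conditional law and $\eta|_{\calE_n}$ has the product law $\Pro_{\phph_n^m(\pp)}|_{\calE_n}$; the coupling is monotone as soon as, for every $i$ and every revealed history $H_i := (\omega_{e_1},\dots,\omega_{e_{i-1}})$, one has $p^\omega_i \ge \pp_{e_i}\,\pi_{n+m}(e_i,\pp)$, which by Bayes is equivalent to the key ratio estimate
\[
\frac{\Pro_\pp[A \mid H_i,\omega_{e_i}=1]}{\Pro_\pp[A \mid H_i]} \;\ge\; \pi_{n+m}(e_i,\pp).
\]

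To establish this estimate, write $e_i = \{v,w\}$ and assume without loss of generality that $\pi_{n+m}(e_i,\pp) = \Pro_\pp[v \nleftrightarrow S_{n+m}]$. With a cluster exploration, at step $i$ the vertex $v$ already lies in the partially explored cluster of $0$, so under $A$ the vertex $v$ is automatically disconnected from $S_{n+m}$; consequently opening $e_i$ can break $A$ only by creating a new $0$-to-$S_{n+m}$ path through $w$. In other words, inside the product measure on the unrevealed edges induced by $\Pro_\pp[\,\cdot\mid H_i]$, the event ``$A$ holds when $\omega_{e_i}=1$'' is implied by the intersection of the two decreasing events ``$A$ still holds when $\omega_{e_i}=0$'' and ``$w \nleftrightarrow S_{n+m}$ without using $e_i$''. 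Applying FKG factorises the corresponding lower bound, and reduces the ratio estimate to the single inequality $\Pro_\pp[w \nleftrightarrow S_{n+m}\mid H_i] \ge \pi_{n+m}(e_i,\pp)$.

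The main obstacle is precisely this last inequality: a priori, $H_i$ could record open edges that create connections toward $S_{n+m}$ and lower $\Pro_\pp[w \nleftrightarrow S_{n+m}\mid H_i]$ below its unconditional value. Handling this is where the Russo-style coupling idea has to do the real work. I expect the rigorous argument to proceed via a stopping-set formalism, in which the history reveals only a partially explored cluster of $0$ together with its closed external boundary: this keeps the unrevealed edges genuinely independent, and, via FKG, the ``favourable'' nature of the revealed information (open edges internal to the partial cluster, closed edges on its external boundary) ensures that $\Pro_\pp[w \nleftrightarrow S_{n+m}\mid H_i] \ge \Pro_\pp[w \nleftrightarrow S_{n+m}]$, which is itself at least $\pi_{n+m}(e_i,\pp) = \Pro_\pp[v \nleftrightarrow S_{n+m}]$ by our choice of $v$. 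Making this last step precise -- in particular, verifying the favourable monotonicity against all possible intermediate configurations of the exploration -- is the technical heart of the proof.
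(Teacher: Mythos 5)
Your overall strategy (adaptive cluster-exploration revelation of the edges, a Bayes-ratio criterion for monotonicity of the coupling, then FKG) is the same as the paper's, and your reduction to the ratio estimate $\Pro_\pp[A\mid H_i,\omega_{e_i}=1]/\Pro_\pp[A\mid H_i]\ge\pi_{n+m}(e_i,\pp)$ is correct -- once unfolded it is the same quantity as the paper's bound on the conditional probability that $e_i$ is not pivotal for $A$. But the final step of your argument is a genuine gap, and you flag it yourself: you land on the inequality $\Pro_\pp[w\nleftrightarrow S_{n+m}\mid H_i]\ge\pi_{n+m}(e_i,\pp)$ and then \emph{hope} that a ``stopping-set/favourable monotonicity'' argument gives $\Pro_\pp[w\nleftrightarrow S_{n+m}\mid H_i]\ge\Pro_\pp[w\nleftrightarrow S_{n+m}]$. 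That inequality is not available by any naive FKG argument: the revealed history $H_i$ consists of open as well as closed edges, and conditioning on the open edges of the partially explored cluster is \emph{unfavourable} for the decreasing event $\{w\nleftrightarrow S_{n+m}\}$. So what you call ``the technical heart'' is in fact a hole, and the stopping-set formalism you gesture at does not close it in the form you propose.

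The idea that closes the hole -- and it is precisely what the paper's proof hinges on -- is to replace $\{w\leftrightarrow S_{n+m}\}$ by the smaller event $B$ that $w$ is connected to $S_{n+m}$ by an open path using \emph{no} edge of $\{e_1,\dots,e_i\}$. The exploration rule guarantees that every open edge among $\{e_1,\dots,e_{i-1}\}$ lies in the cluster of $0$; hence if $e_i$ were pivotal for $A$ and the witnessing path from $w$ to $S_{n+m}$ used some revealed open edge, that path would already connect $0$ to $S_{n+m}$ without $e_i$, contradicting pivotality. Thus $B$, not merely $\{w\leftrightarrow S_{n+m}\}$, upper-bounds pivotality; and since $B$ depends only on unrevealed edges, $\Pro_\pp[B\mid H_i]=\Pro_\pp[B]\le\Pro_\pp[w\leftrightarrow S_{n+m}]\le 1-\pi_{n+m}(e_i,\pp)$, with no further monotonicity argument needed (your one FKG application -- to decouple $B$ from $A$ under $\Pro_\pp[\,\cdot\mid H_i]$ -- is all that is used). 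A smaller point: your ``without loss of generality $\pi_{n+m}(e_i,\pp)=\Pro_\pp[v\nleftrightarrow S_{n+m}]$'' is not a choice you get to make, since the exploration, not you, decides which endpoint lies in the cluster; but it is also unnecessary, because $1-\pi_{n+m}(e_i,\pp)$ is the \emph{maximum} of the two one-arm probabilities and so dominates $\Pro_\pp[w\leftrightarrow S_{n+m}]$ for whichever endpoint $w$ is outside the cluster.
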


Corollary \ref{cor:homog} is a consequence of this proposition:

\begin{proof}[Proof of Corollary \ref{cor:homog}]
Let $p \in (0,1)$ and define $\pp \in (0,1)^\calE$ by $\forall e \in \calE, \pp_e = p$. Let $e \in \calE_n$. Then, each endpoint of $e$ is at distance at least $m$ from $S_{n+m}$, so $\pi_{n+m}(e,\pp) \ge \pi_m(p)$. As a result, $\phph_n^m(\pp)_e \ge p\pi_m(p)$.
\end{proof}

\subsection{Further remarks}

\begin{itemize}
\item Contrary to the other proofs of sharpness (see \cite{Men86,AB87,DT16,DRT19}), we do not use any differential formula (such as Russo's formula).\footnote{In this spirit, it seems worth mentioning here the recent paper \cite{DMT21} that proposes a new proof -- without differential formulas -- of some scaling relations in the plane.} One could argue that this should make the proof easier to extend to some models with dependence. Indeed, the link between the derivative of the probabilities and the pivotal set can be very complicated in dependent models. Unfortunately (and contrary to \cite{DRT19}), we have not succeeded in extending the present proof to other models (such as FK percolation or some continuous percolation model). Moreover, our proof of sharpness is short but longer than the one of \cite{DT16} (see also \cite{DT17}). However, we believe that our approach (and more generally the fact that sharpness can be proven via couplings), intermediate results such as Proposition \ref{prop:coupl}, and our main result Theorem \ref{thm:quant}, may be useful. (See Section \ref{sec:cons} for a consequence of Theorem~\ref{thm:quant}.)

\item The proof of Proposition \ref{prop:coupl} is inspired by \cite[Section 3]{Rus82}. The main difference is that (even if we will not say this explicitly in order to simplify the exposition) we have used a decision tree. Decision trees are central objects in computer science that have been shown to be very useful to prove sharpness results in \cite{DRT19}. See \cite{GS14} for more about the application of decision trees to percolation.
\end{itemize}

\paragraph{Organization of the paper.} Theorem \ref{thm:quant} is proven in Section \ref{sec:exp} by using Lemma \ref{lem:prod} and~\eqref{eq:prop}. Proposition \ref{prop:coupl} is proven in Section \ref{sec:coupl}. In Section \ref{sec:cons}, we discuss a consequence of Theorem \ref{thm:quant} for percolation in high dimensions.

\paragraph{Acknowledgments.} I would like to thank Barbara Dembin and Tom Hutchcroft for inspiring discussions. Moreover, I would like to thank Vincent Beffara and Stephen Muirhead for a simplification of the proof of some lemmas. I am also grateful to Stephen Muirhead for his comments on a previous version of the paper.

\section{Proof of Theorem \ref{thm:quant}}\label{sec:exp}

Theorem \ref{thm:quant} essentially follows by applying \eqref{eq:prop} inductively and by using Lemma \ref{lem:prod} to say that, in this procedure, $p$ has not decreased too much. We first note that it is sufficient to prove the following:

\medskip

For every $p \in (0,1)$ and $i,m,n\ge 1$,
\begin{equation}\label{eq:gamma_suff}
\pi_{n+im}(p\pi_m(p)^2) \ge \pi_{n}(p)^{1/2^i}.
\end{equation}
\begin{proof}[Proof of Theorem \ref{thm:quant} by using \eqref{eq:gamma_suff}]
Let $p \in (0,1)$ and $m,n \ge 1$. We observe that Theorem~\ref{thm:quant} is obvious if $\theta_m(p)\ge 1/2$ or $m \ge n$, so we assume that $\theta_m(p)<1/2$ and $n >m$. Next, we note that
\[
p\pi_m(p)^2 \ge p-2\theta_m(p).
\]
As a result, \eqref{eq:gamma_suff} applied to $i=i_0:=\lfloor n/m \rfloor$ implies that
\[
\theta_{2n}(p-2\theta_m(p)) \le \theta_{n+i_0m}(p-2\theta_m(p)) \le 1 - (1-\theta_n(p))^{1/2^{i_0}}.
\]
We conclude by using that $1-(1-x)^a \le (2\log2)ax$ for every $x \in [0,1/2]$ and $a \ge 0$ and that $i_0 \ge n/m-1$.
\end{proof}

So let us prove \eqref{eq:gamma_suff}. Let $\pp \in (0,1)^\calE$ and $m,n \ge 1$. We first compare $\pi_{n+m}(\phph_n^m(\pp))$ with $\pi_n(\pp)$. To this purpose, we observe that $\pi_{n+m}(\phph_n^m(\pp))$ is larger than or equal to both $\pi_{n+m}(\pp)$ and $\pi_n(\phph_n^m(\pp))$. As a result,~\eqref{eq:prop} implies that
\begin{equation}\label{eq:sqrt_sigma}
\pi_{n+m}(\phph_n^m(\pp)) \ge \sqrt{\pi_n(\pp)}.
\end{equation}
For every $i,m,n \ge 1$, we let
\[
\phph_n^{m,(i)}=\phph_{n+(i-1)m}^m\circ \dots \circ \phph_{n+m}^m \circ \phph_n^m.
\]
By applying \eqref{eq:sqrt_sigma} inductively, we obtain that, for every $\pp \in (0,1)^\calE$ and $i,m,n \ge 1$,
\begin{equation}\label{eq:ind_sigma}
\pi_{n+im}(\phph_n^{m,(i)}(\pp)) \ge \pi_n(\pp)^{1/2^i}.
\end{equation}
In order to apply \eqref{eq:ind_sigma}, we need a lower bound on $\phph_n^{m,(i)}(\pp)$.

\begin{lemma}\label{lem:end2}
Let $p \in (0,1)$, define $\pp \in (0,1)^\calE$ by $\pp_e = p$ for every $e \in \calE$ and let $i,m,n \ge 1$. Then,
\[
\forall e \in \calE, \quad \phph_n^{m,(i)}(\pp)_e \ge p \prod_{j=1}^i \pi_{jm}(p).
\]
\end{lemma}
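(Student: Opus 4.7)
The plan is to prove the lemma by induction on $i$, using the intermediate weights $\pp^{(0)} := \pp$ and $\pp^{(j)} := \phph_{n+(j-1)m}^m(\pp^{(j-1)})$ for $j \ge 1$, so that $\pp^{(i)} = \phph_n^{m,(i)}(\pp)$. A key preliminary observation is that $\phph$ only multiplies edge weights by factors in $[0,1]$, so the sequence $(\pp^{(j)})_{j \ge 0}$ is pointwise non-increasing; in particular $\pp^{(j-1)} \le \pp$ pointwise for every $j \ge 1$.

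The base case $i=0$ is the empty product. For the inductive step, fix an edge $e$. If $e \notin \calE_{n+(i-1)m}$, then by definition of $\phph_{n+(i-1)m}^m$ we have $\pp^{(i)}_e = \pp^{(i-1)}_e$, and the bound follows from the induction hypothesis together with $\pi_{im}(p) \le 1$. Otherwise $e = \{v,w\} \in \calE_{n+(i-1)m}$, and
\[
\pp^{(i)}_e = \pp^{(i-1)}_e \cdot \pi_{n+im}(e, \pp^{(i-1)}).
\]
I would bound the second factor in two steps. First, since $\{v \nleftrightarrow S_{n+im}\}$ and $\{w \nleftrightarrow S_{n+im}\}$ are decreasing events and $\pp^{(i-1)} \le \pp$, one has $\pi_{n+im}(e, \pp^{(i-1)}) \ge \pi_{n+im}(e, \pp)$. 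Second, each endpoint of $e$ lies in $B_{n+(i-1)m}$, hence at graph-distance at least $m$ from $S_{n+im}$; any open path from such an endpoint to $S_{n+im}$ must therefore cross the sphere of radius $m$ centered at that endpoint, and by vertex-transitivity of $\calG$ this event has probability exactly $\theta_m(p)$ under $\Pro_p$. Hence $\pi_{n+im}(e, \pp) \ge \pi_m(p)$. Combining with the induction hypothesis,
\[
\pp^{(i)}_e \ge p \cdot \pi_m(p) \cdot \prod_{j=1}^{i-1} \pi_{jm}(p) \ge p \prod_{j=1}^i \pi_{jm}(p),
\]
where the last inequality uses the monotonicity $\pi_m(p) \ge \pi_{im}(p)$.

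The only subtlety is that the intermediate weight vectors $\pp^{(j)}$ are genuinely inhomogeneous and can, on some edges, be much smaller than $p$; so $\pi_{n+im}(e, \pp^{(i-1)})$ has no direct expression in terms of $p$. The clean resolution is the two-step argument above: pointwise monotonicity $\pp^{(i-1)} \le \pp$ combined with the decreasing-event property removes the inhomogeneity, and only then does vertex-transitivity enter to reduce a "shifted" connection probability to $\theta_m(p)$.
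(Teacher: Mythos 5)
Your proof is correct, and it takes a mildly different route from the paper's. The paper restricts the induction on $i$ to edges $e\in\calE_n$, using that such $e$ lies at distance at least $im$ from $S_{n+im}$ to gain exactly the factor $\pi_{im}(p)$ at step $i$; edges outside $\calE_n$ are then handled by a separate shifting argument ($\phph_n^{m,(i)}(\pp)_e=\phph_{n+jm}^{m,(i-j)}(\pp)_e$ for the appropriate $j$, after which the first case applies). You instead run a single induction over \emph{all} edges: every $e\in\calE_{n+(i-1)m}$ has endpoints at distance at least $m$ from $S_{n+im}$, giving the weaker per-step factor $\pi_m(p)$; since $\pi_m(p)\ge\pi_{im}(p)$, this still closes the induction, and in fact yields the slightly stronger bound $\phph_n^{m,(i)}(\pp)_e\ge p\,\pi_m(p)^i$. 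The cost is that you must first relax $\pi_{n+im}(e,\pp^{(i-1)})$ to $\pi_{n+im}(e,\pp)$ via the pointwise monotonicity $\pp^{(i-1)}\le\pp$ (the paper uses exactly the same observation, phrased as ``$\phph_n^{m,(i-1)}(\pp)\le p$''). The net effect is a cleaner bookkeeping: no case split and no shift, at the price of a marginally cruder per-step estimate that happens to still suffice. Both proofs are essentially the same induction; yours is a legitimate simplification.
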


\begin{proof}
\noindent\textbf{First step: $e \in \calE_n$.} Let $p,i,m,n$ as in the statement. We observe that
\[
\phph_n^{m,(i)}=\phph_{n+(i-1)m}^m \circ \phph_n^{m,(i-1)}
\]
(where $\phph_n^{m,(0)}:=id$). In this step, we consider some $e \in \calE_n$. Since $e \in \calE_{n+(i-1)m}$, we have
\[
\phph_n^{m,(i)}(\pp)_e = \phph_n^{m,(i-1)}(\pp)_e \times \pi_{n+im}(e,\phph_n^{m,(i-1)}(\pp))\\
\ge \phph_n^{m,(i-1)}(\pp)_e \times \pi_{im}(p),
\]
where in the inequality we have used that $\phph_n^{m,(i-1)}(\pp) \le p$ and that the endpoints of $e$ are at distance at least $im$ from $S_{n+im}$. So the result  in the case $e \in \calE_n$ follows by induction on $i$.

\medskip

\noindent \textbf{Second step: $e \notin \calE_n$.} Let $p,i,m,n$ as in the statement. In this step, we consider some $e \notin \calE_n$. If $e \notin \calE_{n+(i-1)m}$, then $\phph_n^{m,(i)}(\pp)_e=\pp_e=p$. If $e \in \calE_{n+(i-1)m} \setminus \calE_n$, then we let $j \in \{1,\dots,i-1\}$ be the number such that $e \in \calE_{n+jm} \setminus \calE_{n+(j-1)m}$ and we observe that
\[
\phph_n^{m,(i)}(\pp)_e = \phph_{n+(i-1)m}^m \circ \dots \circ \phph_{n+jm}^m(\pp)_e = \phph_{n+jm}^{m,(i-j)}(\pp)_e.
\]
We conclude by applying the first step to $i-j$ and $n+jm$ instead of $i$ and $n$.
\end{proof}

We are now in shape to prove \eqref{eq:gamma_suff}.

\begin{proof}[Proof of \eqref{eq:gamma_suff}] Let $i,m,n \ge 1$ and $p \in (0,1)$, and define $\pp_m \in (0,1)^\calE$ by $(\pp_m)_e=p\pi_m(p)$ for every $e \in \calE$. By Lemma~\ref{lem:end2} applied to $p=p\pi_m(p)$ and then by Lemma~\ref{lem:prod},
\[
\forall e \in \calE, \quad \phph_n^{m,(i)}(\pp_m)_e \ge p\pi_m(p) \prod_{j \ge 1} \pi_{jm}(p\pi_m(p)) \ge p\pi_m(p)^2.
\]
This together with \eqref{eq:ind_sigma} applied to $\pp=\pp_m$ implies that
\[
\pi_{n+im}(p\pi_m(p)^2) \ge \pi_n(p\pi_m(p))^{1/2^i} \ge \pi_n(p)^{1/2^i}. \qedhere
\]
\end{proof}

\section{Proof of Proposition \ref{prop:coupl}}\label{sec:coupl}

Let $m,n \ge 1$ and $\pp \in (0,1)^\calE$. In order to simplify the notations, we write
\[
A = \{ 0 \nleftrightarrow S_{n+m} \}, \quad E=\calE_{n+m}, \quad N=\text{Card}(E) \quad \text{and} \quad F=\calE_n.
\]
In this section, we prove Proposition \ref{prop:coupl}. To this purpose, we construct a coupling $(X,Y)=(X_e,Y_e)_{e \in E}$ of
\[
\mu_\pp := \Pro_{\pp} \big[ \omega_{|E} \in \cdot \; \big| \; A \big] \quad \text{and} \quad \Pro_{\phph_n^m(\pp)} \big[ \omega_{|E} \in \cdot \big]
\]
such that $X_{|F} \ge Y_{|F}$. (It would have been sufficient to construct the coupling in $F$ but it will be easier to construct it in $E$.)

\pagebreak

Let $(U_e)_{e \in E}$ be i.i.d.\ uniform random variables in $[0,1]$. First, we define $Y$ by $Y_e = 1$ if $U_e\le\phph_n^m(\pp)_e$ and $Y_e=0$ otherwise, for every $e \in E$.

\medskip

We use notations similar to those in \cite{DRT19}: we let $\vec{E}$ denote the set of all $N$-uples $e=(e_1,\dots,e_N) \in E^N$ with $e_j \ne e_k$ for all $j \ne k$ (i.e.\ this is the set of all possible orderings of $E$). We fix some $e^0 \in \vec{E}$ that we will just use to make some arbitrary decisions: when we refer to the ``first edge'', we always refer to the ordering given by $e^0$.

We define $X$ together with a random variable $\ee$ with values in $\vec{E}$ inductively as follows, where we use the notations $e_{[k]}=(e_1,\dots,e_k)$ and $x_{e_{[k]}}=(x_{e_1},\dots,x_{e_k})$:
\begin{itemize}
\item We let $\ee_1$ be the first edge adjacent to $0$;
\item We let $X_{\ee_1} = 1$ if $U_{\ee_1} \le \mu_\pp [ \omega_{\ee_1} = 1 ]$ and $X_{\ee_1} = 0$ otherwise;
\item Let us assume that $\ee_{[k-1]}$ and $X_{\ee_{[k-1]}}$ have been defined. If there exists an edge $\{v,w\} \in E \setminus \{\ee_1,\dots, \ee_{k-1}\}$ such that $v$ or $w$ is connected to $0$ by edges in $\{\ee_1,\dots,\ee_{k-1}\}$ that are open in $X$, then we let $\ee_k$ be the first such edge. Otherwise, we just let $\ee_k$ be the first edge in $E \setminus \{\ee_1,\dots, \ee_{k-1}\}$;
\item We let $X_{\ee_k} = 1$ if $U_{\ee_k} \le \mu_\pp [\omega_{\ee_k} = 1 \mid \omega_{\ee_{[k-1]}} = X_{\ee_{[k-1]}} ]$ and $X_{\ee_k} = 0$ otherwise.
\end{itemize}
Let us show that $X$ has the desired law (see \cite[Lemma 2.1]{DRT19} for a more general result -- we include a proof here for completeness).

\begin{lemma}\label{lem:law}
In the construction above, we have $X \sim \mu_\pp$.
\end{lemma}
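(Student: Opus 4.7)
My plan is to prove by induction on $k \in \{1, \ldots, N\}$ that the joint law of $(\ee_{[k]}, X_{\ee_{[k]}})$ matches the corresponding $k$-fold marginal of $\mu_\pp$, and then take $k = N$. The key structural observation is that the order $\ee$ is a \emph{deterministic} function of $X|_E$: by definition, the rule that picks $\ee_k$ consults only the fixed base ordering $e^0$ and the previously revealed values $X_{\ee_1}, \ldots, X_{\ee_{k-1}}$. In particular, for every $k$, the pair $(\ee_{[k-1]}, X_{\ee_{[k-1]}})$ is measurable with respect to $\sigma(U_{\ee_1}, \ldots, U_{\ee_{k-1}})$, so conditionally on its value the variable $U_{\ee_k}$ remains uniform on $[0,1]$ and independent of the past.

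Fix $y \in \{0,1\}^E$ and let $e = e(y) \in \vec E$ denote the ordering the rule would produce on input $y$ (run the construction with $y$ in place of $X$). I would show by induction on $k$ the chain-rule identity
\[
\Pro\big[\ee_{[k]} = e_{[k]},\ X_{\ee_j} = y_{e_j} \text{ for } j \le k\big] = \mu_\pp\big[\omega_{e_j} = y_{e_j} \text{ for } j \le k\big].
\]
The base case $k=1$ is immediate since $\ee_1$ is deterministic and $U_{\ee_1}$ is uniform on $[0,1]$. For the inductive step, I would condition on the event at rank $k-1$: by the structural observation, $\ee_k$ equals a specific $e_k$ on this event, $U_{\ee_k}$ is still uniform conditionally, and the construction gives $\Pro[X_{\ee_k} = y_{e_k} \mid \text{past}] = \mu_\pp[\omega_{e_k} = y_{e_k} \mid \omega_{e_{[k-1]}} = y_{e_{[k-1]}}]$. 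Multiplying by the inductive hypothesis and telescoping via the chain rule for $\mu_\pp$ yields the identity at rank $k$. Since $\{X = y\} = \{\ee = e(y)\} \cap \{X_{\ee_j} = y_{e_j} \text{ for all } j \le N\}$, the case $k=N$ gives $\Pro[X = y] = \mu_\pp[\omega = y]$, which is the claim.

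The main point to verify carefully is that the conditional probabilities used in the construction are always well-defined, i.e.\ $\mu_\pp[\omega_{\ee_{[k-1]}} = X_{\ee_{[k-1]}}] > 0$ almost surely. This should follow from the same induction: any partial configuration that the exploration can produce is compatible with $A = \{0 \nleftrightarrow S_{n+m}\}$, because closing every remaining edge yields an extension in $A$ (the explored connected cluster of $0$ cannot have already reached $S_{n+m}$, since otherwise the previous conditional probability would have been $0$ and the last added edge would have been closed). Combined with $\pp \in (0,1)^\calE$, every such cylinder event then has positive $\Pro_\pp$-probability, which is the only real subtlety in the argument.
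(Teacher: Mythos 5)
Your proof is correct and takes essentially the same approach as the paper: you prove the chain-rule factorization of $\Pro[X=x]$ into the corresponding $\mu_\pp$-conditionals, using that $\ee$ is a deterministic function of $X$ and that $U_{\ee_k}$ is independent of $(\ee_{[k]},X_{\ee_{[k-1]}})$. The only difference is presentational (explicit induction versus a telescoping product), plus you spell out the well-definedness of the conditional probabilities, which the paper leaves implicit.
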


\begin{proof}
Let $x$ in the support of $X$ (that is included in that of $\mu_\pp$ by construction) and let $e$ be the unique element of $\vec{E}$ such that $\ee=e$ on $\{X=x\}$. We have
\[
\Pro [ X = x ] = \prod_{k=1}^N \Pro \big[ X_{e_k} = x_{e_k} \; \big| \; X_{e_{[k-1]}} = x_{e_{[k-1]}} \big].
\]
By construction of $(X,\ee)$, $\{ X_{e_{[k-1]}} = x_{e_{[k-1]}} \} = \{ X_{e_{[k-1]}} = x_{e_{[k-1]}} \} \cap \{ \ee_{[k]} = e_{[k]} \}$. As a result, the above equals
\begin{align*}
&\prod_{k=1}^N
\begin{cases}
\Pro \big[ U_{e_k} \le \mu_\pp \big[ \omega_{e_k} = 1 \; \big| \; \omega_{e_{[k-1]}} = X_{e_{[k-1]}} \big] \; \big| \; X_{e_{[k-1]}} = x_{e_{[k-1]}} \big] \text{ if $x_{e_k}=1$}\\
\Pro \big[ U_{e_k} > \mu_\pp \big[ \omega_{e_k} = 1 \; \big| \; \omega_{e_{[k-1]}} = X_{e_{[k-1]}} \big] \; \big| \; X_{e_{[k-1]}} = x_{e_{[k-1]}} \big] \text{ if $x_{e_k}=0$}
\end{cases}
\\
& = \prod_{k=1}^N \mu_\pp \big[ \omega_{e_k} = x_{e_k} \; \big| \; \omega_{e_{[k-1]}} = x_{e_{[k-1]}} \big] = \mu_\pp[x],
\end{align*}
where we have used that $U_{e_k}$ is independent of $X_{e_{[k-1]}}$ (because the latter is measurable with respect to $U_{e_{[k-1]}}$).
\end{proof}

We are now in shape to prove Proposition \ref{prop:coupl}.
\begin{proof}[Proof of Proposition \ref{prop:coupl}]
It is sufficient to show that $X_{|F} \ge Y_{|F}$. As a result, it is sufficient to show that, if $(x,e) \in \{0,1\}^E \times \vec{E}$ is in the support of $(X,\ee)$, then for every $k$ such that $e_k \in F=\calE_n$, we have
\[
\mu_\pp \big[ \omega_{e_k} = 1 \; \big| \; \omega_{e_{[k-1]}} = x_{e_{[k-1]}} \big] \ge \phph_n^m(\pp)_{e_k} = \pp_{e_k}\pi_{n+m}(e_k,\pp).
\]
Let us fix such $(x,e)$ and $k$. We recall that we say that an edge is pivotal for $A$ if changing the state of this edge changes $1_{A}$. We first observe that, under $\Pro_\pp$, $\omega_{e_k}$ is independent of $A \cap \{ \omega_{e_{[k-1]}}=x_{e_{[k-1]}} \} \cap \{ \text{$e_k$ is not piv.\ for $A$} \}$ (because the latter is measurable with respect to $\omega_{|E \setminus \{e_k\}})$. By using this in the second equality below, we obtain that
\begin{multline*}
\mu_\pp \big[\omega_{e_k} = 1 \; \big| \; \omega_{e_{[k-1]}}=x_{e_{[k-1]}} \big]\\
= \Pro_\pp \big[\omega_{e_k} = 1, \text{$e_k$ is not piv.\ for $A$} \; \big| \; A, \omega_{e_{[k-1]}}=x_{e_{[k-1]}} \big]\\
= \pp_{e_k} \Pro_\pp \big[\text{$e_k$ is not piv.\ for $A$} \; \big| \; A, \omega_{e_{[k-1]}}=x_{e_{[k-1]}} \big].
\end{multline*}
We deduce that it is sufficient to prove that
\begin{equation}\label{eq:suff_piv}
\Pro_\pp \big[\text{$e_k$ is not piv.\ for $A$} \; \big| \; A, \omega_{e_{[k-1]}}=x_{e_{[k-1]}} \big] \ge \pi_{n+m}(e_k,\pp). 
\end{equation}
In order to prove this, we distinguish between two cases, where $\tau$ is the last $j \in \{1,\dots,N\}$ at which at least one endpoint of $e_j$ is connected to $0$ by edges in $\{e_1,\dots,e_{j-1}\}$ that are open in $x$.
\begin{itemize}
\item If $k > \tau$, then knowing that $\omega_{e_{[k-1]}}=x_{e_{[k-1]}}$ is sufficient to know whether $A$ holds or not, so the left-hand-side of \eqref{eq:suff_piv} equals $1$;
\item If $k \le \tau$, then we can write $e_k=\{v_k,w_k\}$ where $w_k$ is connected to $0$ by edges in $\{e_1,\dots,e_{k-1}\}$ that are open in $x$. We observe that, if $e_k$ is pivotal for $A$ and if $\omega_{e_{[k-1]}}=x_{e_{[k-1]}}$, then there is an open path from $v_k$ to $S_{n+m}$ that does not use any edge from $\{e_1,\dots,e_k\}$. We let $B$ denote this event. By this observation and then the Harris--FKG inequality (see \cite[Theorem 2.4]{Gri99} or \cite[Lemma 3]{BR06}) applied to $\Pro_\pp [ \, \cdot \mid \omega_{e_{[k-1]}} = x_{e_{[k-1]}}]$, we have
\begin{align*}
&\Pro_\pp \big[ \text{$e_k$ is piv.\ for $A$} \; \big| \; A, \omega_{e_{[k-1]}} = x_{e_{[k-1]}} \big]\\
&\le \Pro_\pp \big[ B \; \big| \; A, \omega_{e_{[k-1]}} = x_{e_{[k-1]}} \big]\\
&\le \Pro_\pp \big[ B \; \big| \; \omega_{e_{[k-1]}}= x_{e_{[k-1]}} \big] \text{ (this is where we have used Harris--FKG)}\\
&= \Pro_\pp [ B ] \le \Pro_\pp [ v_k \leftrightarrow S_{n+m} ] \le 1-\pi_{n+m}(e_k,\pp).
\end{align*}
\end{itemize}
This shows \eqref{eq:suff_piv} and ends the proof.
\end{proof}

\section{A consequence for percolation in high dimensions}\label{sec:cons}

Consider bond percolation on the hypercubic lattice $\Z^d$ for some $d \ge 11$. The following near-critical sharpness estimate has recently been proven in \cite{CHS21,HMS21}.

\begin{theorem}[{\cite[Theorem 1]{CHS21} and \cite[Theorem 1.2]{HMS21}}]\label{thm:hms}
There exist $c,C,\varepsilon_0>0$ such that, for every $\varepsilon \in (0,\varepsilon_0)$ and $n \ge 1$,
\[
\frac{c}{n^2}\exp(-C\varepsilon^{1/2}n) \le \theta_n(p_c-\varepsilon) \le \frac{C}{n^2}\exp(-c\varepsilon^{1/2}n).
\]
\end{theorem}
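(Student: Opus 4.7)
My plan is to derive the upper bound in Theorem \ref{thm:hms} as a near-immediate consequence of Theorem \ref{thm:quant} together with the mean-field one-arm estimate at criticality,
\[
\theta_n(p_c) \le \frac{C}{n^2}, \qquad n \ge 1,
\]
which is known in high dimensions via the lace expansion. The matching lower bound is not in reach of Theorem \ref{thm:quant}, which is a one-sided estimate, so for that direction I would simply quote the argument of \cite{HMS21} (a second-moment computation on a tube of radius $\sim\varepsilon^{-1/2}$, driven by the triangle condition and the decay $|x|^{-(d-2)}$ of the critical two-point function). The novelty here is really the upper bound.

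For the upper bound, I would apply Theorem \ref{thm:quant} at $p = p_c$: for all $m,n \ge 1$,
\[
\theta_{2n}\bigl(p_c - 2\theta_m(p_c)\bigr) \le C_0\,\frac{\theta_n(p_c)}{2^{n/m}} \le \frac{C_0\,C}{n^2}\cdot 2^{-n/m}.
\]
Given $\varepsilon \in (0,\varepsilon_0)$ with $\varepsilon_0$ small enough, I would choose $m = m(\varepsilon) := \lceil (2C/\varepsilon)^{1/2}\rceil$, so that $2\theta_m(p_c)\le \varepsilon$ and hence $p_c - \varepsilon \le p_c - 2\theta_m(p_c)$. By monotonicity of $\theta_{2n}$ in $p$,
\[
\theta_{2n}(p_c - \varepsilon) \le \frac{C'}{n^2}\exp\bigl(-c\,\varepsilon^{1/2}\,n\bigr)
\]
for constants $C',c>0$ depending only on $d$. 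Relabeling $2n \to n$ absorbs a harmless factor into the constants and yields exactly the upper bound claimed in Theorem \ref{thm:hms}. The only content of the argument is the trade-off $m\asymp \varepsilon^{-1/2}$, which balances the prefactor $\theta_n(p_c)\sim n^{-2}$ against the exponential gain $2^{-n/m}$; any other scaling would degrade the exponent away from the optimal $\varepsilon^{1/2} n$.

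For the lower bound $\theta_n(p_c - \varepsilon) \ge c\, n^{-2}\exp(-C\varepsilon^{1/2} n)$, the approach is to open a connection block by block: cut $[0,n]$ into $\sim n\varepsilon^{1/2}$ blocks of size $\varepsilon^{-1/2}$, each of which at level $p_c$ carries a connection with probability of order $\varepsilon$ (matching $\theta_{\varepsilon^{-1/2}}(p_c)\sim\varepsilon$), and pay a factor $(1-c\varepsilon)$ per edge for using $p_c - \varepsilon$ instead of $p_c$. A second-moment estimate, controlled by the triangle diagram, makes this rigorous. This is the route followed in \cite{HMS21}, and I would not try to recover it from Theorem \ref{thm:quant}.

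The main obstacle is genuinely on the lower-bound side and is not addressed by the methods of this paper: Theorem \ref{thm:quant} is by nature an upper bound on $\theta_n$ and cannot produce a matching lower bound. On the upper-bound side, the only subtle point is to ensure that the one-arm estimate $\theta_n(p_c)\le C/n^2$ is indeed available at the whole scale $n$ appearing in Theorem \ref{thm:quant}; once it is, the optimization in $m$ is immediate.
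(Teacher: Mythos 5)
Your argument for the upper bound is essentially identical to the paper's: apply Theorem~\ref{thm:quant} at $p=p_c$, choose $m=\lceil(2C/\varepsilon)^{1/2}\rceil$ so that $2\theta_m(p_c)\le\varepsilon$ by \eqref{eq:KN11}, and conclude by monotonicity and the bound $\theta_n(p_c)\le C/n^2$, with the scaling $m\asymp\varepsilon^{-1/2}$ being the crux in both. You also correctly observe, as the paper does, that the lower bound is out of reach of Theorem~\ref{thm:quant} and must be taken from \cite{HMS21}.
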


In this subsection, we give a new (shorter) proof of the upper bound from Theorem \ref{thm:hms}. (The techniques of the present paper seem quite useless in order to prove the lower bound.) We need the following result from \cite{KN11} (that relies on \cite{HS94} for $d \ge 19$ and on \cite{FH17} for the extension to $d \ge 11$):

There exist $c,C>0$ such that, for every $n \ge 1$,
\begin{equation}\label{eq:KN11}
\frac{c}{n^2} \le \theta_n(p_c) \le \frac{C}{n^2}. 
\end{equation}
(Actually, we just need the upper bound.) 

\begin{proof}[Proof of the upper bound from Theorem \ref{thm:hms}]
We fix a constant $C$ such that \eqref{eq:KN11} holds. Also, we fix some $\varepsilon_0>0$ such that
\[
(2C/\varepsilon_0)^{1/2} \ge 1.
\]
Let $\varepsilon \in (0,\varepsilon_0)$ and write $m=\lceil (2C/\varepsilon)^{1/2} \rceil$. We have
\begin{multline*}
\forall n \ge 1, \quad \theta_n(p_c-\varepsilon) \overset{\eqref{eq:KN11}}{\le} \theta_n( p_c - 2\theta_m(p_c) )\overset{\text{\eqref{eq:KN11} and Thm.\ \ref{thm:quant}}}{\le} \frac{C_0C}{n^2} \times \frac{1}{2^{n/m}}\\
\le \frac{C_0 C}{n^2} \exp\Big( -\frac{\log2}{2 \times (2C)^{1/2}}\varepsilon^{1/2} n \Big),
\end{multline*}
where in the last inequality we have used that $m \le 2 \times (2C/\varepsilon)^{1/2}$ (because $\varepsilon \le \varepsilon_0$).
\end{proof}

\begin{remark}
The reason why Theorem \ref{thm:quant} is sharp (in some sense) for percolation in high dimensions is that the mean-field equality $\eta_1 \nu = 1$ holds for these graphs. Here, $\eta_1$ is the one-arm exponent and $\nu$ is the correlation length exponent -- see e.g.\ the recent work \cite{DM21} for the definition of $\eta_1$ and $\nu$, some inequalities involving these exponents, and further references. For percolation on $\Z^d$ with $d \ge 11$, it has been proven that these exponents exist and satisfy $\eta_1=1/\nu=2$ \cite{HS94,KN11,FH17}. Let us note that, as one can see for instance by using Theorem~\ref{thm:quant}, if $\eta_1$ and $\nu$ exist, then they satisfy $\eta_1 \nu \le 1$.
\end{remark}

\footnotesize

\bibliographystyle{alpha}
\bibliography{ref_new_proof_sharpness}

\end{document}